\newcommand{\HE}{Namie of Handling Editor}
\newcommand{\DoS}{Month/Day/Year}
\newcommand{\DoA}{Month/Day/Year}
\newcommand{\CA}{Name of Corresponding Author}
\newcommand{\Names}{Dockter et al.}
\newcommand{\Title}{Kronecker products of Perron similarities}
\DeclareMathOperator{\conv}{conv}
\DeclareMathOperator{\coni}{coni}
\DeclareMathOperator{\diag}{diag}
\begin{document}
%  Leave these commented lines here
%\input{ELAheader-template.tex}
% ELA insert correct page number
\setcounter{page}{1}

\thispagestyle{empty}

%Insert the title of the paper
\title{\Title\thanks{Received
by the editors on \DoS.
Accepted for publication on \DoA. 
Handling Editor: \HE. Corresponding Author: \CA}}

\author
{Janelle M.~Dockter\thanks{University of Washington Bothell, Bothell, WA
98011-8246, USA (jdockter@uw.edu, rlp5684@uw.edu, jdta@uw.edu).}
\and
Pietro Paparella\thanks{Division of Engineering and Mathematics, 
University of Washington Bothell, Bothell, WA
98011-8246, USA (pietrop@uw.edu).}
\and
Robert L.~Perry\footnotemark[2]
\and
Jonathan D Ta\footnotemark[2]
}

\pagestyle{myheadings}
\markboth{\Names}{\Title}

\maketitle

\begin{abstract}
An invertible matrix is called a {Perron similarity} if one of its columns and the corresponding row of its inverse are both nonnegative or both nonpositive. Such matrices are of relevance and import in the study of the {nonnegative inverse eigenvalue problem}. In this work, Kronecker products of Perron similarities are examined and used to to construct {ideal} Perron similarities all of whose rows are {extremal}. 
\end{abstract}

\begin{keywords}
Kronecker product, Perron similarity, ideal Perron similarity, nonnegative inverse eigenvalue problem
\end{keywords}

\begin{AMS}
15A18, 15B48, 52B12
\end{AMS}

%---------------------
\section{Introduction}

An invertible matrix is called a \emph{Perron similarity} if one of its columns and the corresponding row of its inverse are both nonnegative or both nonpositive. Real Perron similarities were introduced by Johnson and Paparella \cite{jp2016,jp2017} and the case for complex matrices is forthcoming \cite{jp_psniep}. 

These matrices were introduced to examine the celebrated \emph{nonnegative inverse eigenvalue problem} vis-\'{a}-vis the polyhedral cone 
\[ \mathcal{C}(S) := \{ x \in \mathbb{R}^n \mid SD_x S^{-1} \ge 0 \} \] 
called the \emph{(Perron) spectracone of $S$}, and the set
\[ \mathcal{P}(S) := \left\{ x \in \mathcal{C}(S) \left\vert \begin{Vmatrix} x \end{Vmatrix}_\infty = 1 \right.\right\}, \]
called the \emph{(Perron) spectratope of $S$}. The latter is not necessarily a polytope, but in some cases is finitely-generated (this is true for some complex matrices as well). Notice that the entries of of any element in $\mathcal{P}(S)$ form a \emph{normalized spectrum} (i.e., $x_k = 1$ for some $k$ and $\max_i \{ \vert x_i \vert \} \le 1$) of a nonnegative matrix.

In particular, Johnson and Paparella \cite{jp2016} showed that if 
\[ H_n :=
\left\{
\begin{array}
{cl}
\begin{bmatrix} 
1 & 1 \\ 
1 & -1 
\end{bmatrix},     &  n = 2 \\
H_2 \otimes H_{n-1} =
\begin{bmatrix} 
H_{n-1} & H_{n-1} \\ 
H_{n-1} & -H_{n-1}
\end{bmatrix},     & n > 2 
\end{array} 
\right.,
\]
then $\mathcal{C}(H_n)$ and $\mathcal{P}(H_n)$ coincide with the conical hull and the convex hull of the rows of $H_n$, respectively. 

In this work, Kronecker products of Perron similarities are examined. In particular, it is shown that the Kronecker product of Perron similarities is a Perron similarity. An example is constructed to refute a result presented by Johnson and Paparella \cite[Corollary 3.17]{jp2016} (see Example \ref{ex:JPfail}). It is also shown that $\mathcal{C}(S)\otimes\mathcal{C}(T) \subset \mathcal{C}(S \otimes T)$ and $\mathcal{P}(S)\otimes\mathcal{P}(T) \subseteq \mathcal{P}(S \otimes T)$ (strict containment in the latter occurs for some matrices). Kronecker products of \emph{ideal} Perron similarities (see Section \ref{sec:ideal} below) yield Perron similarities all of whose rows are \emph{extremal}.

%--------------------------------
\section{Notation and Background}

For $a \in \mathbb{Z}$ and $n \in \mathbb{N}$, $a\bmod{n}$ is abbreviated to $a\%n$. For $n \in \mathbb{N}$, the set $\{ 1,\ldots,n\}$ is denoted by $\langle n \rangle$. 

The set of $m$-by-$n$ matrices over a field $\mathbb{F}$ is denoted by $\mathsf{M}_{m \times n}(\mathbb{F})$; when $m = n$, the set $\mathsf{M}_{m \times n}(\mathbb{F})$ is abbreviated to $\mathsf{M}_{n}(\mathbb{F})$. If $A \in \mathsf{M}_{m \times n}(\mathbb{F})$, then the $(i,j)$-entry of $A$ is denoted by $[A]_{ij}$, $a_{ij}$, or $a_{i,j}$.

In this work, $\mathbb{F}$ stands for $\mathbb{C}$ or $\mathbb{R}$. The set of $m$-by-$n$ matrices with entries over $\mathbb{F}$ is denoted by $\mathsf{M}_{m \times n}(\mathbb{F}) = \mathsf{M}_{m \times n}$; when $m = n$, $\mathsf{M}_{n \times n}(\mathbb{F})$ is abbreviated to $\mathsf{M}_n (\mathbb{F}) = \mathsf{M}_n$. The set of all $n$-by-$1$ column vectors is identified with the set of all ordered $n$-tuples with entries in $\mathbb{F}$ and thus denoted by $\mathbb{F}^n$. The set of nonsingular matrices in $\mathsf{M}_{n}$ is denoted by $\mathsf{GL}_n(\mathbb{F}) = \mathsf{GL}_n$.

Given $x \in \mathbb{F}^n$, $[x]_i = x_i$ denotes the $i$\textsuperscript{th} entry of $x$ and $\diag(x) = D_x = D_{x^\top} \in \mathsf{M}_{n}(\mathbb{F})$ denotes the diagonal matrix whose $(i,i)$-entry is $x_i$. Notice that for scalars $\alpha$, $\beta \in \mathbb{F}$, and vectors $x$, $y \in \mathbb{F}^n$, $D_{\alpha x + \beta y} = \alpha D_x + \beta D_y$. 

Denote by $I$, $e$, and $e_i$ the identity matrix, the all-ones vector, and the $i$\textsuperscript{th} canonical basis vector, respectively. The size of these objects are determined from the context in which they appear.

If $A \in \mathsf{M}_{m \times n}$ and $B \in \mathsf{M}_{p \times q}$, then the the \emph{Kronecker product of $A$ and $B$}, denoted by $A \otimes B$, is the $mp$-by-$nq$ matrix defined blockwise by $A \otimes B = \begin{bmatrix} a_{ij}B \end{bmatrix}$. More precisely, but less intuitively,  
\begin{equation}
    \label{eq:ijentry}
    [A \otimes B]_{ij} = a_{\lceil i/p \rceil, \lceil j/q \rceil}b_{[(i-1) \% p]+1,[(j-1) \% q]+1}.
\end{equation} 
If $x \in \mathbb{C}^m$ and $y \in \mathbb{C}^n$, then \eqref{eq:ijentry} simplifies to 
\begin{equation}
\label{eq: ientry vector}
    [x \otimes y]_{i} = x_{\lceil i/n \rceil}y_{(i-1) \%n + 1}.
\end{equation}
If $S, T \subseteq \mathbb{F}^n$, then $S \otimes T := \{ s \otimes t \mid s \in S,~t \in T \}$.

If $S\in \mathsf{GL}_n$, then the \emph{(Perron) spectracone of $S$}, denoted by $\mathcal{C}(S)$, is defined by $\mathcal{C}(S)=\{x \in \mathbb{F}^n \mid SD_xS^{-1} \ge 0\}$. The \emph{(Perron) spectratope of $S$} is the set \( \mathcal{P}(S) := \left\{ x \in \mathcal{C}(S) \left\vert \begin{Vmatrix} x \end{Vmatrix}_\infty = 1 \right.\right\}\). The conical hull and convex hull of the rows of $S$ are denoted by $\mathcal{C}_r(S)$ and $\mathcal{P}_r(S)$, respectively. If $S \in \mathsf{GL}_n$, then $S D_e S^{-1} = S I S^{-1} = I \ge 0$, i.e., $\emptyset \subset \coni(e) \subseteq \mathcal{C}(S)$.

If there is an $i \in \langle n \rangle$ such that $Se_i$ and $e_i^\top S^{-1}$ are both nonnegative or both nonpositive for $S \in \mathsf{GL}_n$, then $S$ is called a \emph{Perron similarity}.

%----------------------------
\section{Preliminary Results}

%------------
\begin{lemma}
    \label{lem:altdivalg}
If $i \in \mathbb{Z}$ and $n \in \mathbb{N}$, then 
\[ 
i = (\lceil i/n \rceil - 1)n + (i-1)\%n + 1.
\]
\end{lemma}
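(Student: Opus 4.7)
The plan is to reduce the claim to the familiar division algorithm applied to $i-1$, and then reconcile the resulting quotient with $\lceil i/n \rceil - 1$ via a standard ceiling-floor identity.

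First, I would apply the division algorithm: write $i - 1 = qn + r$ where $q \in \mathbb{Z}$ and $0 \le r \le n-1$. By definition of the mod operation used in the paper, $r = (i-1)\%n$, so rearranging gives
\[ i \;=\; qn + (i-1)\%n + 1. \]
Comparing with the claimed identity, it then suffices to establish $q = \lceil i/n \rceil - 1$, equivalently the identity $\lceil i/n \rceil = \lfloor (i-1)/n \rfloor + 1$, valid for every $i \in \mathbb{Z}$ and $n \in \mathbb{N}$.

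Second, I would verify this identity by splitting on whether $n$ divides $i$. If $i = kn$, then $\lceil i/n \rceil = k$, while $i - 1 = (k-1)n + (n-1)$ forces $q = k - 1$, so $q + 1 = k$. Otherwise, write $i = kn + s$ with $1 \le s \le n-1$; then $\lceil i/n \rceil = k + 1$ and $i - 1 = kn + (s-1)$ with $0 \le s - 1 \le n - 2$, so $q = k$ and $q + 1 = k + 1$. In both cases $q = \lceil i/n \rceil - 1$, and substituting into the displayed equation above yields the desired formula.

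There is no real obstacle here; the lemma is essentially a bookkeeping identity linking the ceiling-based index $\lceil i/n \rceil$ (which selects the block in a Kronecker-product entry via \eqref{eq:ijentry}) with the mod-based offset $(i-1)\%n$. The only subtlety is being careful at the boundary case $n \mid i$, where $(i-1)\%n = n-1$ rather than $0$, which is exactly why the $-1$ and $+1$ shifts appear in the statement.
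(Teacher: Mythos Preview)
Your proof is correct and follows essentially the same route as the paper: apply the division algorithm to $i-1$ and then invoke the identity $\lfloor (i-1)/n \rfloor = \lceil i/n \rceil - 1$. The only difference is that the paper simply asserts this ceiling--floor identity, whereas you supply a short case-by-case verification.
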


\begin{proof}
By the division algorithm, 
\[ i-1 = \left\lfloor \frac{i-1}{n} \right\rfloor n + (i-1)\%n \]
and because
\[ \left\lfloor \frac{i-1}{n} \right\rfloor = \left\lceil \frac{i}{n} \right\rceil - 1, \]
it follows that 
\[ i-1 = (\lceil i/n \rceil - 1)n + (i-1)\%n, \]
i.e., 
\[ 
i = (\lceil i/n \rceil - 1)n + (i-1)\%n + 1. 
\]
\end{proof}

%------------
\begin{lemma} 
    \label{lem:kronprodbasis}
If $e_k\in \mathbb{F}^m$ and $e_\ell \in \mathbb{F}^n$, then $e_k\otimes e_\ell=e_{(k-1)n+\ell}\in \mathbb{F}^{mn}$.
\end{lemma}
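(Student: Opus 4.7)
The plan is to compute the $i$-th entry of $e_k \otimes e_\ell$ directly using the scalar formula \eqref{eq: ientry vector} and show that it is $1$ precisely when $i = (k-1)n + \ell$. Setting $x = e_k$ and $y = e_\ell$ in \eqref{eq: ientry vector} gives
\[
[e_k \otimes e_\ell]_i = [e_k]_{\lceil i/n \rceil}\,[e_\ell]_{(i-1)\%n + 1},
\]
which equals $1$ if $\lceil i/n \rceil = k$ and $(i-1)\%n + 1 = \ell$, and equals $0$ otherwise. So the task reduces to showing that these two index equalities are equivalent to $i = (k-1)n + \ell$.

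The forward direction is immediate from Lemma~\ref{lem:altdivalg}: substituting $\lceil i/n \rceil = k$ and $(i-1)\%n + 1 = \ell$ into the identity $i = (\lceil i/n \rceil - 1)n + (i-1)\%n + 1$ yields $i = (k-1)n + \ell$. For the reverse direction, I would note that since $1 \le \ell \le n$ one has $0 \le \ell - 1 \le n-1$, so the equation $i - 1 = (k-1)n + (\ell-1)$ is precisely the division algorithm representation of $i-1$ modulo $n$. By uniqueness, $\lfloor (i-1)/n \rfloor = k-1$ and $(i-1)\%n = \ell - 1$, which, together with the identity $\lceil i/n\rceil = \lfloor (i-1)/n\rfloor + 1$ used in Lemma~\ref{lem:altdivalg}, gives the two index equalities.

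The only remaining thing to check is that $(k-1)n + \ell$ is a legitimate index in $\langle mn \rangle$, i.e., $1 \le (k-1)n + \ell \le mn$. Both bounds follow from $1 \le k \le m$ and $1 \le \ell \le n$ by inspection. Hence $e_k \otimes e_\ell$ has a single nonzero entry, equal to $1$, in position $(k-1)n + \ell$, which is the definition of $e_{(k-1)n + \ell} \in \mathbb{F}^{mn}$.

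I do not foresee any real obstacle here; the result is essentially a bookkeeping consequence of Lemma~\ref{lem:altdivalg} combined with \eqref{eq: ientry vector}. The only subtlety worth stating carefully is the appeal to uniqueness in the division algorithm to justify the reverse direction, which ensures that the single nonzero entry occurs in exactly the claimed position.
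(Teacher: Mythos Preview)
Your proposal is correct and follows essentially the same approach as the paper: both compute $[e_k\otimes e_\ell]_i$ via \eqref{eq: ientry vector} and argue that the entry equals $1$ precisely when $i=(k-1)n+\ell$. The only cosmetic difference is that you invoke Lemma~\ref{lem:altdivalg} and the uniqueness of the division algorithm explicitly, whereas the paper carries out the ceiling and remainder computations by hand; the two are logically interchangeable.
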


\begin{proof}
It suffices to show that $[e_k\otimes e_\ell]_i = 1$ if and only if $i = (k-1)n+\ell$; to this end, if $i=(k-1)n+\ell$, then 
\[
\left\lceil \frac{i}{n} \right\rceil 
= \left\lceil \frac{(k-1)n+\ell}{n} \right\rceil 
= \left\lceil k - 1 + \frac{\ell}{n} \right\rceil 
= k
\]
and 
\begin{align*}
(i-1)\%n + 1 
&= ((k-1)n+\ell-1)\%n+1    \\
&= (\ell-1)\%n + 1          \\
&= \ell - 1 + 1 = \ell. 
\end{align*} 
Thus, according to \eqref{eq: ientry vector},
\begin{align*}
[e_k\otimes e_\ell]_i
=[e_k]_{\lceil \frac{i}{n}\rceil}[e_\ell]_{(i-1)\%n + 1}                                              
=[e_k]_{k}[e_\ell]_{\ell}                                                                              
=1.
\end{align*}

Conversely, if  
\[ 1 = [e_k\otimes e_\ell]_i =[e_k]_{\lceil \frac{i}{n}\rceil}[e_\ell]_{(i-1)\%n + 1}, \]
then $k = \lceil {i}/{n} \rceil$ and $\ell = (i-1)\%n + 1$. Hence, by the division algorithm, there is a positive integer $q$ such that $(i-1) = qn + \ell - 1$, i.e., $i = qn + \ell$.
Thus,
\begin{align*}
k
= \left\lceil \frac{qn + \ell}{n} \right\rceil
= \left\lceil q + \frac{\ell}{n} \right\rceil
= q + 1 
\end{align*}
i.e., $q = k - 1$. Therefore, $i = qn + \ell = (k-1)n + \ell$.
\end{proof}

%------------
\begin{lemma} 
    \label{lem:basisdecomp}
If $e_i \in \mathbb{F}^{mn}$, then $e_i = e_{\lceil i/n \rceil} \otimes e_{(i-1)\%n + 1}$, where $e_{\lceil i/n \rceil} \in \mathbb{F}^m$ and $e_{(i-1)\%n + 1} \in \mathbb{F}^n$.
\end{lemma}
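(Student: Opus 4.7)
The plan is to apply the two preceding lemmas in sequence, so essentially no new computation is required. Set $k := \lceil i/n \rceil$ and $\ell := (i-1)\%n + 1$. First I would note that these indices are in the correct ranges, namely $k \in \langle m \rangle$ (because $1 \le i \le mn$ forces $1 \le \lceil i/n \rceil \le m$) and $\ell \in \langle n \rangle$ (because $(i-1)\%n \in \{0,\ldots,n-1\}$), so $e_k \in \mathbb{F}^m$ and $e_\ell \in \mathbb{F}^n$ are well-defined standard basis vectors.

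Next I would invoke Lemma \ref{lem:kronprodbasis} with this choice of $k$ and $\ell$ to obtain
\[
e_k \otimes e_\ell = e_{(k-1)n + \ell} \in \mathbb{F}^{mn}.
\]
Finally, Lemma \ref{lem:altdivalg} says precisely that $(k-1)n + \ell = (\lceil i/n \rceil - 1)n + (i-1)\%n + 1 = i$, and therefore $e_k \otimes e_\ell = e_i$, as desired.

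There is no real obstacle: the content of the statement is the converse of Lemma \ref{lem:kronprodbasis}, and Lemma \ref{lem:altdivalg} supplies exactly the arithmetic identity needed to translate between the single index $i$ and the pair $(k,\ell)$. The only minor care is to verify that the chosen $k$ and $\ell$ lie in the proper index sets, which follows from the definitions of the ceiling function and the mod operation.
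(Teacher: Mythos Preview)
Your proof is correct and follows essentially the same approach as the paper: apply Lemma~\ref{lem:kronprodbasis} to compute $e_k \otimes e_\ell = e_{(k-1)n+\ell}$ and then use Lemma~\ref{lem:altdivalg} to identify $(k-1)n+\ell$ with $i$. The only addition you make is the explicit verification that $k \in \langle m \rangle$ and $\ell \in \langle n \rangle$, which the paper leaves implicit.
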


\begin{proof}
If $e_{\lceil i/n \rceil} \in \mathbb{F}^m$ and $e_{(i-1)\%n + 1} \in \mathbb{F}^n$, then 
\[ 
e_{\lceil i/n \rceil} \otimes e_{(i-1)\%n + 1} = e_{(\lceil i/n \rceil - 1)n + (i-1)\%n + 1} = e_i
\]
by Lemmas \ref{lem:altdivalg} and \ref{lem:kronprodbasis}.
\end{proof}

%------------
\begin{lemma}
If $S \in \mathsf{M}_{m \times n}$ and $T \in \mathsf{M}_{p \times q}$, then
\begin{align*}
e_i^\top(S\otimes T) = e_{\lceil \frac{i}{p} \rceil}^\top (S) \otimes e_{(i-1)\% p+1}^\top (T).
\end{align*}
\end{lemma}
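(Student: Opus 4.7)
The plan is to reduce the identity to the standard mixed-product property for Kronecker products, after first decomposing the standard basis vector $e_i$. Since $S\otimes T\in\mathsf{M}_{mp\times nq}$, the vector $e_i$ appearing on the left-hand side belongs to $\mathbb{F}^{mp}$, so Lemma \ref{lem:basisdecomp} (with $p$ playing the role of $n$) applies and gives
\[
e_i \;=\; e_{\lceil i/p\rceil}\otimes e_{(i-1)\%p+1},
\]
where $e_{\lceil i/p\rceil}\in\mathbb{F}^m$ and $e_{(i-1)\%p+1}\in\mathbb{F}^p$. These dimensions are exactly the ones compatible with left-multiplying $S$ and $T$ respectively.

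Next, I would invoke the transpose identity $(A\otimes B)^\top = A^\top\otimes B^\top$ to write
\[
e_i^\top \;=\; e_{\lceil i/p\rceil}^\top\otimes e_{(i-1)\%p+1}^\top.
\]
Then the mixed-product property $(A\otimes B)(C\otimes D)=(AC)\otimes(BD)$ — applicable since $e_{\lceil i/p\rceil}^\top S$ and $e_{(i-1)\%p+1}^\top T$ are both well defined — yields
\[
e_i^\top(S\otimes T) \;=\; \bigl(e_{\lceil i/p\rceil}^\top\otimes e_{(i-1)\%p+1}^\top\bigr)(S\otimes T) \;=\; \bigl(e_{\lceil i/p\rceil}^\top S\bigr)\otimes \bigl(e_{(i-1)\%p+1}^\top T\bigr),
\]
which is the desired equality.

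There is no real obstacle here; the statement is essentially an immediate corollary of Lemma \ref{lem:basisdecomp} combined with two standard Kronecker-product identities. The only point requiring care is matching the dimensions, specifically that the row-count $p$ of $T$ is the parameter that appears in the ceiling and modulo expressions, so that the decomposition of $e_i\in\mathbb{F}^{mp}$ produces factors of the correct sizes $m$ and $p$ to act on $S$ and $T$.
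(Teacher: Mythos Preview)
Your proposal is correct and follows essentially the same approach as the paper's own proof: decompose $e_i$ via Lemma~\ref{lem:basisdecomp}, apply the transpose identity $(A\otimes B)^\top=A^\top\otimes B^\top$, and then the mixed-product property. The only additional content you supply is the explicit remark about matching dimensions, which is a helpful clarification but not a departure from the paper's argument.
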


\begin{proof}
By Lemma \ref{lem:basisdecomp} and properties of the Kronecker product,
\begin{align*}
    e_i^\top (S \otimes T)
    &= (e_{\lceil \frac{i}{p} \rceil} \otimes e_{(i-1)\% p+1})^\top (S \otimes T)       \\
    &= (e_{\lceil \frac{i}{p} \rceil}^\top \otimes e_{(i-1)\% p+1}^\top) (S \otimes T)  \\
    &= e_{\lceil \frac{i}{p} \rceil}^\top (S) \otimes e_{(i-1)\%p+1}^\top (T).            
\end{align*}
\end{proof}

%------------
\begin{lemma} 
    \label{Lem: kron of diagonals}
If $x \in \mathbb{F}^m$ and $y \in \mathbb{F}^n$, then $D_x \otimes D_y = D_{x \otimes y}$.
\end{lemma}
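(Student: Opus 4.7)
The plan is to verify the equality entry-by-entry, exploiting the fact that the Kronecker product of two diagonal matrices is again diagonal. First I would note that both $D_x \otimes D_y$ and $D_{x \otimes y}$ are $mn \times mn$ matrices, and show that $D_x \otimes D_y$ is diagonal: for indices $i \neq j$, the off-diagonal entry
\[
[D_x \otimes D_y]_{ij} = [D_x]_{\lceil i/n \rceil,\, \lceil j/n \rceil}\, [D_y]_{(i-1)\%n+1,\, (j-1)\%n+1}
\]
from \eqref{eq:ijentry} vanishes, because Lemma~\ref{lem:basisdecomp} (or equivalently Lemma~\ref{lem:altdivalg}) implies that $i \ne j$ forces either $\lceil i/n \rceil \ne \lceil j/n \rceil$ or $(i-1)\%n+1 \ne (j-1)\%n+1$, and in either case one of the two diagonal factors is $0$. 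Meanwhile $D_{x\otimes y}$ is diagonal by definition.

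Next I would compare diagonal entries. Applying \eqref{eq:ijentry} with $i=j$ gives
\[
[D_x \otimes D_y]_{ii} = [D_x]_{\lceil i/n \rceil,\, \lceil i/n \rceil}\, [D_y]_{(i-1)\%n+1,\, (i-1)\%n+1} = x_{\lceil i/n \rceil}\, y_{(i-1)\%n+1},
\]
while \eqref{eq: ientry vector} yields
\[
[D_{x \otimes y}]_{ii} = [x\otimes y]_i = x_{\lceil i/n \rceil}\, y_{(i-1)\%n+1}.
\]
Since the diagonal entries agree and the off-diagonal entries are zero on both sides, the two matrices coincide.

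There isn't really a hard step here; the argument is a direct bookkeeping exercise using the indexing formulas \eqref{eq:ijentry} and \eqref{eq: ientry vector} that were already established. The only point requiring a moment of care is the off-diagonal vanishing, and that is handled by invoking the uniqueness of the pair $(\lceil i/n \rceil,\, (i-1)\%n+1)$ as a representation of $i$, which is exactly Lemma~\ref{lem:altdivalg}.
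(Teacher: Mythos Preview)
Your argument is correct and follows essentially the same route as the paper: both proofs use \eqref{eq:ijentry} together with Lemma~\ref{lem:altdivalg} to see that $D_x\otimes D_y$ is diagonal, and then match the diagonal entries via \eqref{eq: ientry vector}. Your contrapositive phrasing of the off-diagonal step is in fact slightly cleaner than the paper's ``if and only if'' formulation, but the substance is identical.
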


\begin{proof}
If $i,j \in \langle mn \rangle$, then
\begin{align*}
[D_x \otimes D_y]_{ij} 
= [D_x]_{\big \lceil \frac{i}{n} \big \rceil, \big \lceil \frac{j}{n} \big \rceil} [D_y]_{(i-1)\%n+1,(j-1)\%n+1}
\end{align*}
in view of \eqref{eq:ijentry}. 
Since 
\[ 
[D_x]_{ij} =
\begin{cases}
x_i,& i = j \\
0,  & i \ne j,
\end{cases}
\]
it follows that $[D_x \otimes D_y]_{ij} \ne 0$ if and only if $\lceil {i}/{n} \rceil = \lceil {j}/{n} \rceil$ and $(i-1)\%n+1=(j-1)\%n+1$. These equations hold, in light of Lemma \ref{lem:altdivalg}, if and only if $i=j$. Thus, $D_x \otimes D_y$ is a diagonal matrix and, when $i=j$, notice that
\begin{align*}
[D_x]_{\big \lceil \frac{i}{n} \big \rceil, \big \lceil \frac{i}{n} \big \rceil} [D_y]_{(i-1)\%n+1,(i-1)\%n+1} 
&= x_{\lceil \frac{i}{n} \rceil} y_{(i-1)\%n+1}                                                                                                 \\
&= [x \otimes y]_i                                                                                                                              \\
&= [D_{x \otimes y}]_{ii},
\end{align*}
as required. 
\end{proof}

%------------
\begin{lemma}
    \label{lem:kronnorms}
If $x \in \mathbb{F}^m$ and $y \in \mathbb{F}^n$, then $\begin{Vmatrix}x\otimes y\end{Vmatrix}_p = \begin{Vmatrix}x\end{Vmatrix}_p \begin{Vmatrix}y\end{Vmatrix}_p$, $\forall p \in [1,\infty]$.
\end{lemma}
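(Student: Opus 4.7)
The plan is to unpack the $p$-norm as a sum (or max), substitute the entrywise formula \eqref{eq: ientry vector} for $x\otimes y$, and then use Lemma \ref{lem:altdivalg} to convert the single index $i \in \langle mn \rangle$ into a pair $(k,\ell) \in \langle m \rangle \times \langle n \rangle$. Once this reindexing is done, the sum (respectively, maximum) factors cleanly into a product of the corresponding quantities for $x$ and $y$.

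For the case $p \in [1,\infty)$, I would first write
\[
\begin{Vmatrix} x \otimes y \end{Vmatrix}_p^p = \sum_{i=1}^{mn} \bigl| [x\otimes y]_i \bigr|^p = \sum_{i=1}^{mn} \bigl| x_{\lceil i/n \rceil} \bigr|^p \bigl| y_{(i-1)\%n + 1} \bigr|^p,
\]
using \eqref{eq: ientry vector}. By Lemma \ref{lem:altdivalg}, the map $i \mapsto (\lceil i/n \rceil,\, (i-1)\%n + 1)$ is a bijection from $\langle mn \rangle$ onto $\langle m \rangle \times \langle n \rangle$, so the sum on the right equals
\[
\sum_{k=1}^{m} \sum_{\ell=1}^{n} |x_k|^p |y_\ell|^p = \left( \sum_{k=1}^{m} |x_k|^p \right)\left( \sum_{\ell=1}^{n} |y_\ell|^p \right) = \begin{Vmatrix} x \end{Vmatrix}_p^p \begin{Vmatrix} y \end{Vmatrix}_p^p.
\]
Taking $p$-th roots yields the desired identity.

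For $p = \infty$, I would argue analogously with the maximum: the same bijection gives
\[
\begin{Vmatrix} x \otimes y \end{Vmatrix}_\infty = \max_{i \in \langle mn \rangle} \bigl| x_{\lceil i/n \rceil} y_{(i-1)\%n + 1} \bigr| = \max_{(k,\ell) \in \langle m \rangle \times \langle n \rangle} |x_k| |y_\ell| = \begin{Vmatrix} x \end{Vmatrix}_\infty \begin{Vmatrix} y \end{Vmatrix}_\infty.
\]

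There is no real obstacle here; the only thing to be careful about is the index bookkeeping, and that is precisely what Lemma \ref{lem:altdivalg} delivers. The proof is essentially a one-line appeal to the factorization of entries in a Kronecker product together with the bijection between $\langle mn \rangle$ and $\langle m \rangle \times \langle n \rangle$.
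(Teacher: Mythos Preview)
Your proof is correct and follows essentially the same approach as the paper: expand the $p$-norm via \eqref{eq: ientry vector}, reindex the single sum over $\langle mn\rangle$ as a double sum over $\langle m\rangle\times\langle n\rangle$, and factor. The only minor difference is that for $p=\infty$ you argue directly with the maximum via the same bijection, whereas the paper instead invokes $\|x\|_\infty=\lim_{p\to\infty}\|x\|_p$ to inherit the result from the finite-$p$ case; both are equally valid and equally short.
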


\begin{proof}
Notice that
\begin{align*}
\begin{Vmatrix} x\otimes y \end{Vmatrix}_p 
&= \sqrt[p]{\sum_{k=1}^{mn} \left\vert [x\otimes y]_k \right\vert^p}                                        \\
&= \sqrt[p]{\sum_{k=1}^{mn} \lvert x_{\lceil \frac{k}{n} \rceil} y_{(k-1)\%n + 1} \rvert^p}                 \\
&= \sqrt[p]{\sum_{k=1}^{mn} \lvert x_{\lceil \frac{k}{n} \rceil} \rvert^p \lvert y_{(k-1)\%n+1} \rvert^p}. 
\end{align*}
Since $\lceil {k}/{n} \rceil \in \langle m \rangle$ and $(k-1)\%n+1 \in \langle n \rangle$, it follows that
\begin{align*}
\begin{Vmatrix}x\otimes y \end{Vmatrix}_p
&= \sqrt[p]{\sum_{i=1}^m \sum_{j=1}^n \lvert x_i \rvert^p \lvert y_j \rvert^p}                              \\
&= \sqrt[p]{\sum_{i=1}^m \left(\lvert x_i \rvert^p \left(\sum_{j=1}^n \lvert y_j \rvert^p \right)\right)}   \\
&= \sqrt[p]{\left(\sum_{i=1}^m \lvert x_i \rvert^p \right) \left(\sum_{j=1}^n \lvert y_j \rvert^p \right)}  \\
&= \sqrt[p]{\sum_{i=1}^m \lvert x_i \rvert^p} \cdot \sqrt[p]{\sum_{j=1}^n \lvert y_j \rvert^p}              
= \begin{Vmatrix}x\end{Vmatrix}_p \begin{Vmatrix}y\end{Vmatrix}_p
\end{align*}
The case when $p = \infty$ follows from the fact that $\begin{Vmatrix}x\end{Vmatrix}_\infty = \lim_{p\to\infty}\begin{Vmatrix}x\end{Vmatrix}_p$.
\end{proof}

%---------------------
\section{Main Results}

%--------------
\begin{theorem}
    \label{thm:subsetcontain}
If $S\in \mathsf{GL}_m$ and $T\in \mathsf{GL}_n$, then $\mathcal{C}(S) \otimes \mathcal{C}(T) \subseteq \mathcal{C}(S \otimes T)$ and $\mathcal{P}(S) \otimes \mathcal{P}(T) \subseteq \mathcal{P}(S \otimes T)$.
\end{theorem}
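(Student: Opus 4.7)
The plan is to reduce both containments to the previously established Kronecker identities for diagonal matrices (Lemma \ref{Lem: kron of diagonals}) and for $p$-norms (Lemma \ref{lem:kronnorms}), combined with the mixed-product property $(A \otimes B)(C \otimes D) = (AC) \otimes (BD)$ and the fact that the Kronecker product of two entrywise nonnegative matrices is entrywise nonnegative.

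For the spectracone containment, I would fix an arbitrary element $x \otimes y \in \mathcal{C}(S) \otimes \mathcal{C}(T)$, so that $S D_x S^{-1} \ge 0$ and $T D_y T^{-1} \ge 0$ by definition. The key computation is
\[
(S \otimes T) D_{x \otimes y} (S \otimes T)^{-1}
= (S \otimes T)(D_x \otimes D_y)(S^{-1} \otimes T^{-1})
= (S D_x S^{-1}) \otimes (T D_y T^{-1}),
\]
where the first equality uses Lemma \ref{Lem: kron of diagonals} together with the standard fact $(S \otimes T)^{-1} = S^{-1} \otimes T^{-1}$, and the second uses the mixed-product property twice. Since both factors in the final Kronecker product are nonnegative, the product is nonnegative, so $x \otimes y \in \mathcal{C}(S \otimes T)$.

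For the spectratope containment, I would start from $x \in \mathcal{P}(S)$ and $y \in \mathcal{P}(T)$; the spectracone argument above already gives $x \otimes y \in \mathcal{C}(S \otimes T)$, and by Lemma \ref{lem:kronnorms} applied with $p = \infty$,
\[
\begin{Vmatrix} x \otimes y \end{Vmatrix}_\infty = \begin{Vmatrix} x \end{Vmatrix}_\infty \begin{Vmatrix} y \end{Vmatrix}_\infty = 1,
\]
so $x \otimes y \in \mathcal{P}(S \otimes T)$, as required.

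I do not anticipate any serious obstacle; the entire argument is essentially a two-line computation plus a norm identity. The only point that needs mild care is justifying $(S \otimes T)^{-1} = S^{-1} \otimes T^{-1}$, which follows immediately from the mixed-product property applied to $(S \otimes T)(S^{-1} \otimes T^{-1}) = I_m \otimes I_n = I_{mn}$, and checking that all the set-theoretic quantifiers work out (i.e., it suffices to verify the containment for generating elements of the form $x \otimes y$, which is exactly how $\mathcal{C}(S) \otimes \mathcal{C}(T)$ and $\mathcal{P}(S) \otimes \mathcal{P}(T)$ are defined in the Notation section).
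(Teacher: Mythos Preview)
Your proposal is correct and is essentially identical to the paper's own proof: both arguments combine Lemma~\ref{Lem: kron of diagonals}, the mixed-product identity (and its consequence $(S\otimes T)^{-1}=S^{-1}\otimes T^{-1}$), and nonnegativity of Kronecker products to handle $\mathcal{C}$, and then invoke Lemma~\ref{lem:kronnorms} with $p=\infty$ for $\mathcal{P}$. The only cosmetic difference is that you run the displayed chain of equalities from $(S\otimes T)D_{x\otimes y}(S\otimes T)^{-1}$ outward, whereas the paper starts from $(SD_xS^{-1})\otimes(TD_yT^{-1})$.
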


\begin{proof}
If $z \in \mathcal{C}(S) \otimes \mathcal{C}(T)$, then $z = x \otimes y$, where $x \in \mathcal{C}(S)$ and $y \in \mathcal{C}(T)$. Thus, \(SD_xS^{-1} \ge 0\) and \(TD_y T^{-1} \ge 0\)
By Lemma \ref{Lem: kron of diagonals} and properties of the Kronecker product,
\begin{align*}
(SD_xS^{-1})\otimes (TD_y T^{-1}) 
&= (S \otimes T)(D_x \otimes D_y)(S^{-1} \otimes T^{-1})    \\
% &= (S \otimes T)(D_{x \otimes y})(S^{-1} \otimes T^{-1})    \\
&= (S \otimes T)(D_{x \otimes y})(S \otimes T)^{-1}\geq 0,
\end{align*}
since the Kronecker product of nonnegative vectors is nonnegative.
Therefore, $z \in C(S \otimes T)$ and $\mathcal{C}(S) \otimes \mathcal{C}(T) \subseteq C(S\otimes T)$. 

If, in addition, $z \in \mathcal{P}(S) \otimes \mathcal{P}(T)$, then $x \in \mathcal{P}(S)$ and $y \in \mathcal{P}(T)$, i.e., $\begin{Vmatrix}x\end{Vmatrix}_\infty = \begin{Vmatrix}y\end{Vmatrix}_\infty = 1$. By Lemma \ref{lem:kronnorms}, 
\begin{align*}
\begin{Vmatrix}z\end{Vmatrix}_\infty 
= \begin{Vmatrix}x \otimes y\end{Vmatrix}_\infty 
= \begin{Vmatrix}x\end{Vmatrix}_\infty \begin{Vmatrix}y\end{Vmatrix}_\infty 
= 1
\end{align*}
i.e., $z \in P(S\otimes T)$ and $P(S) \otimes P(T) \subseteq P(S\otimes T)$.
\end{proof}

%--------------
\begin{theorem}
\label{thm: Perron Similarity}
If $S\in \mathsf{GL}_m$ and $T\in \mathsf{GL}_n$ are Perron similarities, then $S\otimes T$ is a Perron similarity.
\end{theorem}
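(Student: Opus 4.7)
The plan is to use the characterization of a Perron similarity directly: find an index $i \in \langle mn \rangle$ such that $(S \otimes T)e_i$ and $e_i^\top (S \otimes T)^{-1}$ have the same sign pattern (both nonnegative or both nonpositive). Since $S$ is a Perron similarity, I can pick $k \in \langle m \rangle$ so that $Se_k$ and $e_k^\top S^{-1}$ are both nonnegative or both nonpositive; similarly, pick $\ell \in \langle n \rangle$ for $T$. The natural candidate for the distinguished index of $S \otimes T$ is $i := (k-1)n+\ell$, which by Lemma \ref{lem:kronprodbasis} satisfies $e_i = e_k \otimes e_\ell$.

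Next, I would compute the relevant column and row via the mixed-product property of the Kronecker product. For the column,
\[
(S \otimes T)e_i = (S \otimes T)(e_k \otimes e_\ell) = (Se_k) \otimes (Te_\ell),
\]
and, since $(S \otimes T)^{-1} = S^{-1} \otimes T^{-1}$, for the row,
\[
e_i^\top (S \otimes T)^{-1} = (e_k^\top \otimes e_\ell^\top)(S^{-1} \otimes T^{-1}) = (e_k^\top S^{-1}) \otimes (e_\ell^\top T^{-1}).
\]

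Finally, I would finish with a short sign analysis over the four sign-pattern combinations for the pairs $(Se_k, e_k^\top S^{-1})$ and $(Te_\ell, e_\ell^\top T^{-1})$. The Kronecker product of two nonnegative vectors is nonnegative, and the Kronecker product of two nonpositive vectors is also nonnegative (entrywise products of like signs); while mixed sign combinations produce nonpositive Kronecker products. Crucially, the column $(Se_k) \otimes (Te_\ell)$ and the row $(e_k^\top S^{-1}) \otimes (e_\ell^\top T^{-1})$ will always have the \emph{same} resulting sign pattern, because the same pair of sign choices is applied to both. So in every case, $(S \otimes T)e_i$ and $e_i^\top (S \otimes T)^{-1}$ are either both nonnegative or both nonpositive, completing the proof.

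There is no real obstacle here; the argument is essentially bookkeeping once Lemma \ref{lem:kronprodbasis} and the mixed-product property are invoked. The only point worth stating carefully is the observation that the sign pattern of the column and of the row coincide because the sign of a Kronecker product depends only on the signs of the two factors, and the same two factors govern both expressions.
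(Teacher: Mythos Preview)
Your proposal is correct and follows essentially the same route as the paper: pick the distinguished indices $k$ and $\ell$, set $i=(k-1)n+\ell$ via Lemma~\ref{lem:kronprodbasis}, and use the mixed-product property together with $(S\otimes T)^{-1}=S^{-1}\otimes T^{-1}$ to factor the relevant column and row. Your explicit four-case sign analysis is in fact slightly more careful than the paper's argument, which tacitly treats only the ``both nonnegative'' case.
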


\begin{proof}
By definition, $\exists k\in \left< m\right>$ and $\exists \ell \in \left< n\right>$ such that the vectors $Se_k$, $e_k^\top S^{-1}$, $Te_\ell$, and $e_\ell^\top T^{-1}$ are nonnegative. By Lemma \ref{lem:kronprodbasis},  
\begin{align*}
(S\otimes T) e_{(k-1)n+\ell}
= (S\otimes T)(e_k\otimes e_\ell) 
&= Se_k\otimes Te_\ell\geq 0
\end{align*}
and
\begin{align*}
e_{(k-1)n+\ell}^\top (S\otimes T)^{-1} 
&= (e_k\otimes e_\ell)^\top(S\otimes T)^{-1}            \\
&=(e_k^\top\otimes e_\ell^\top)(S^{-1}\otimes T^{-1})   \\
&=e_k^\top S^{-1}\otimes e_\ell^\top T^{-1} \geq 0.
\end{align*}
Therefore, $S\otimes T$ is a Perron similarity.
\end{proof}

% %------------------
% \begin{proposition}
% If $S \in \mathsf{GL}_1$ and $T \in \mathsf{GL}_n$, then $C(S \otimes T) = \mathcal{C}(T)$. 
% \end{proposition}

% \begin{proof}
% For ease of notation, the one-by-one matrix $[\alpha]$ is abbreviated to $\alpha$. Since $\alpha\otimes T = \alpha T$ and $\mathcal{C}(T) = \mathcal{C}(\alpha T)$, $\alpha \ne 0$, it follows that if $S \in \mathsf{GL}_1(\mathbb{F})$, then
% \[ \mathcal{C}(S \otimes T) = \mathcal{C}(\alpha \otimes T) = \mathcal{C}(\alpha T) = \mathcal{C}(T). \qedhere \]
% \end{proof}

% %------------------
% \begin{proposition}
% If $S \in \mathsf{GL}_n, T \in \mathsf{GL}_m$, and $C(S \otimes T)$ is trivial, then $\mathcal{C}(S) \otimes \mathcal{C}(T) = C(S \otimes T)$.
% \end{proposition}

% \begin{proof}
% If $\mathcal{C}(S \otimes T) = \coni(e)$, then, by Theorem \ref{thm:subsetcontain}, 
% \[ \coni(e) \subseteq \mathcal{C}(S) \otimes \mathcal{C}(T) \subseteq \mathcal{C}(S \otimes T) = \coni(e), \]
% which can only hold if $\mathcal{C}(S) \otimes \mathcal{C}(T) = C(S \otimes T)$
% \end{proof}

%-------------
\begin{remark} 
    \label{rem:convexcone}
If $x,y \in \mathcal{C}(S)$ and $\alpha,\beta \geq 0$, then $\alpha x + \beta y \in \mathcal{C}(S)$, i.e., $\mathcal{C}(S)$ is a convex cone.
\end{remark}

%-------------
\begin{remark} 
    \label{rem: PS nontrivial spec}
If $S\in \mathsf{GL}_n$ is a Perron similarity, then 
\[ S D_{e_i} S^{-1} = (Se_i)(e_i^\top S^{-1}) \ge 0, \] 
i.e., $\exists x\in \mathcal{C}(S)$ such that $x\neq \alpha e$ for every nonnegative $\alpha$.
\end{remark}

%------------
\begin{lemma}
    \label{lem:nz}
If $S$ is a Perron similarity, then there is a vector $x \in \mathcal{C}(S)$ such that $x$ is totally nonzero and not a scalar multiple of $e$.
\end{lemma}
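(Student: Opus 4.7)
The plan is to exhibit a suitable $x$ explicitly as a conic combination of two vectors already known to lie in $\mathcal{C}(S)$, using Remark \ref{rem:convexcone} to stay inside the cone.

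First, I would note that $e \in \mathcal{C}(S)$: indeed, $SD_e S^{-1} = S I S^{-1} = I \ge 0$, as already observed in the paper. Second, by the definition of a Perron similarity, there is some index $i \in \langle n \rangle$ for which $Se_i$ and $e_i^\top S^{-1}$ are both nonnegative or both nonpositive; in either case their outer product satisfies
\[
S D_{e_i} S^{-1} = (Se_i)(e_i^\top S^{-1}) \ge 0,
\]
so $e_i \in \mathcal{C}(S)$, exactly as recorded in Remark \ref{rem: PS nontrivial spec}.

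Now I would set $x := e + e_i$. Since $e, e_i \in \mathcal{C}(S)$ and $1, 1 \ge 0$, Remark \ref{rem:convexcone} yields $x \in \mathcal{C}(S)$. The entries are $x_j = 1$ for $j \neq i$ and $x_i = 2$, so $x$ is totally nonzero. Moreover, $x$ is not a scalar multiple of $e$, since its $i$\textsuperscript{th} entry differs from its other entries.

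There is no real obstacle here; the statement is essentially a convexity argument combined with the two basic vectors $e$ and $e_i$ that the definition of a Perron similarity and the identity $SIS^{-1} = I$ place inside $\mathcal{C}(S)$. The only thing to be careful about is that the definition of Perron similarity allows the nonpositive case as well as the nonnegative case, but in either case the outer product $(Se_i)(e_i^\top S^{-1})$ is a product of two vectors of the same sign, hence nonnegative, so the argument proceeds uniformly.
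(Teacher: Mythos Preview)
Your argument is correct and follows essentially the same route as the paper's proof: both take a vector in $\mathcal{C}(S)$ that is not a multiple of $e$ (the paper invokes Remark~\ref{rem: PS nontrivial spec}, whose underlying vector is precisely the $e_i$ you use) and then add a positive multiple of $e$ to force all entries to be nonzero while preserving membership in the cone via Remark~\ref{rem:convexcone}. The only difference is cosmetic: the paper writes $x = x' + \alpha e$ with $\alpha > \max_i |\Re(x'_i)|$ for a generic such $x'$, whereas you specialize immediately to $x' = e_i$ and $\alpha = 1$, which lets you read off the entries $(1,\dots,1,2,1,\dots,1)$ directly. Both arguments tacitly require $n>1$ so that $e+e_i$ (equivalently, $e_i$ itself) is not a scalar multiple of $e$.
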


\begin{proof}
By Remark \ref{rem: PS nontrivial spec}, $\exists x'\in \mathcal{C}(S)$ such that $x'$ is not a scalar multiple of $e$. Select $\alpha\in \mathbb{R}$ such that $\alpha >\max_i |\Re(x'_i)|$. If $x := x'+\alpha e$, then $x$ is totally nonzero, not a scalar multiple of $e$, and belongs to $\mathcal{C}(S)$ by Remark \ref{rem:convexcone}.
\end{proof}

%--------------
\begin{theorem}
    \label{thm:conecontain}
If $S\in \mathsf{GL}_m$ and $T\in \mathsf{GL}_n$ are Perron similarities such that $m>1$ and $n>1$, then $\mathcal{C}(S)\otimes \mathcal{C}(T)\subset \mathcal{C}(S\otimes T)$. 
\end{theorem}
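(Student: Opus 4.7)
The plan is to exploit the fact that $\mathcal{C}(S \otimes T)$ is a convex cone (Remark \ref{rem:convexcone}), whereas $\mathcal{C}(S) \otimes \mathcal{C}(T)$ consists only of ``rank-one'' tensors and is generally not closed under addition. A natural candidate for a vector lying in $\mathcal{C}(S \otimes T) \setminus \bigl(\mathcal{C}(S) \otimes \mathcal{C}(T)\bigr)$ is therefore a sum of two simple Kronecker products from $\mathcal{C}(S) \otimes \mathcal{C}(T)$ whose matricization has rank two rather than one.

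To produce such a vector, first observe that $e \in \mathcal{C}(S)$ and $e \in \mathcal{C}(T)$, since $S D_e S^{-1} = I \ge 0$ and similarly for $T$. Because $S$ and $T$ are Perron similarities with $m, n > 1$, Lemma \ref{lem:nz} supplies vectors $x \in \mathcal{C}(S)$ and $y \in \mathcal{C}(T)$ that are totally nonzero and not scalar multiples of $e$. Set $z := x \otimes e + e \otimes y$. By Theorem \ref{thm:subsetcontain}, both summands lie in $\mathcal{C}(S \otimes T)$, so Remark \ref{rem:convexcone} gives $z \in \mathcal{C}(S \otimes T)$.

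The decisive step is to verify that $z \notin \mathcal{C}(S) \otimes \mathcal{C}(T)$; equivalently, that $z \ne u \otimes v$ for every $u \in \mathbb{F}^m$ and $v \in \mathbb{F}^n$. I would identify $\mathbb{F}^{mn}$ with $\mathsf{M}_{m \times n}(\mathbb{F})$ via the matricization $w \mapsto W$, $W_{ij} := w_{(i-1)n + j}$; by \eqref{eq: ientry vector} this map sends $u \otimes v$ to the rank-one matrix $uv^\top$. Under the same map, $z$ corresponds to $Z := xe^\top + ey^\top$. It therefore suffices to show that $\operatorname{rank}(Z) = 2$.

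The rank check is the one nontrivial step and the main obstacle. Suppose $\operatorname{rank}(Z) \le 1$, so that all columns of $Z$ are pairwise collinear. Since $y$ is not a scalar multiple of $e$, choose $j_1, j_2$ with $y_{j_1} \ne y_{j_2}$; the columns $x + y_{j_1} e$ and $x + y_{j_2} e$ differ by the nonzero vector $(y_{j_1} - y_{j_2}) e$, and a short linear-dependence computation then forces $x$ to be a scalar multiple of $e$, contradicting the choice of $x$. Hence $\operatorname{rank}(Z) = 2$, $z$ admits no representation $u \otimes v$, and the containment is strict. Once the matricization viewpoint is in place, every remaining step is routine.
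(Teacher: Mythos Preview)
Your proof is correct and takes a somewhat different route than the paper's. Both arguments start from Lemma~\ref{lem:nz} to obtain $x\in\mathcal{C}(S)$ and $y\in\mathcal{C}(T)$ that are totally nonzero and not scalar multiples of $e$, but the paper then works with the vector $z' := x\otimes y + \varepsilon e$ (for suitable $\varepsilon>0$ keeping $z'$ totally nonzero) and shows $z'\notin\mathcal{C}(S)\otimes\mathcal{C}(T)$ via an explicit cross-ratio computation: assuming $z'=x'\otimes y'$ and comparing four entries forces $\varepsilon(x_i-x_j)(y_k-y_\ell)=0$, a contradiction. Your choice $z := x\otimes e + e\otimes y$ together with the matricization $z\mapsto Z = xe^\top + ey^\top$ replaces this algebra with the structural observation that $Z$ has rank two while any simple tensor matricizes to a matrix of rank at most one. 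The two arguments are really the same ``rank-two versus rank-one'' obstruction---the paper's cross-ratio identity is a $2\times 2$ minor of the matricization of $z'$ in disguise---but your formulation makes the mechanism transparent, dispenses with the auxiliary $\varepsilon$-perturbation, and in fact yields the stronger conclusion that $z$ is not a simple tensor $u\otimes v$ for \emph{any} $u,v$, not merely for $u\in\mathcal{C}(S)$, $v\in\mathcal{C}(T)$.
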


\begin{proof}
By Lemma \ref{lem:nz}, we may select $x\in \mathcal{C}(S)$ and $y\in \mathcal{C}(T)$ such that $x$ and $y$ are totally nonzero and not scalar multiples of $e$. If $z := x \otimes y$, then $z\in \mathcal{C}(S\otimes T)$ by Theorem \ref{thm:subsetcontain}. As $x$ and $y$ are not scalar multiples of $e$, there are integers $i,j \in \langle m\rangle$ and $k,\ell \in \langle n\rangle$ such that $x_i\neq x_j$ and $y_k\neq y_\ell$. Notice that the vector $z$ contains the blocks $x_i y$ and $x_j y$ so the vector $z$ has the entries $z_\alpha = x_iy_k$, $z_\beta=x_iy_\ell$, $z_\gamma=x_j y_k$, and $z_\delta = x_j y_\ell$. Furthermore, notice that   \[\frac{z_\alpha}{z_\beta}=\frac{y_k}{y_\ell}=\frac{z_\gamma}{z_\delta}. \] 

Select $\varepsilon > 0$ such that $z' := z + \varepsilon e$ is totally nonzero. By Remark \ref{rem:convexcone}, $z' \in \mathcal{C}(S \otimes T)$. For contradiction, assume that $z' = x' \otimes y'$, where $x'\in \mathcal{C}(S)$ and $y'\in \mathcal{C}(T)$. The vectors $x'$ and $y'$ must be totally nonnzero (otherwise, $z'$ would not be totally nonzero). By a similar argument above,  
\[ \frac{z'_\alpha}{z'_\beta}=\frac{y'_k}{y'_\ell}=\frac{z'_\gamma}{z'_\delta}. \]
However,
\begin{align*}
\frac{z'_\alpha}{z'_\beta} &= \frac{z'_\gamma}{z'_\delta}                                                                                           \\
\iff 
\frac{z_\alpha + \varepsilon}{z_\beta + \varepsilon} &= \frac{z_\gamma+\varepsilon}{z_\delta+\varepsilon}                                           \\
\iff 
\frac{x_iy_k+\varepsilon}{x_iy_\ell+\varepsilon}&=\frac{x_jy_k+\varepsilon}{x_jy_\ell+\varepsilon}                                                  \\
\iff 
(x_iy_k+\varepsilon)(x_jy_\ell+\varepsilon) &= (x_iy_\ell+\varepsilon)(x_jy_k+\varepsilon)                                                          \\
\iff 
x_ix_jy_ky_\ell+\varepsilon x_iy_k+\varepsilon x_jy_\ell+\varepsilon ^2&=x_ix_jy_ky_\ell+\varepsilon x_iy_\ell+\varepsilon x_jy_k+\varepsilon^2     \\
\iff 
\varepsilon (x_i-x_j)(y_k-y_\ell) &=0                                                                                                               \\
\iff 
x_i-x_j = 0~\text{or}~y_k-y_\ell &= 0,
\end{align*}
a contradiction. Thus, $\mathcal{C}(S)\otimes \mathcal{C}(T)\subset C(S\otimes T)$.  
\end{proof}

%--------------
\begin{example}
    \label{ex:JPfail}
Johnson and Paparella \cite[Corollary 3.17]{jp2016} stated that $S$ is a Perron similarity if and only if $\coni(e)$ is properly contained in $\mathcal{C}(S)$. A contribution of this work is the refutation of this result with a counterexample constructed via the Kronecker product. 

Indeed, the matrix
\begin{equation*}
S := 
\begin{bmatrix}
1 & 2 & 1 & 2   \\
1 & 1 & 1 & 1   \\
1 & 2 & -1 & -2 \\
1 & 1 & -1 & -1    
\end{bmatrix}
\end{equation*}
is the Kronecker product of 
\[ 
H_2 =
\begin{bmatrix}
1 &  1 \\
1 & -1
\end{bmatrix}
\]
and 
\[ 
T :=
\begin{bmatrix}
1 &  2 \\
1 & 1
\end{bmatrix}
\]
The inverse of $S$ is 
\[
\begin{bmatrix}
-0.5 & 1 & -0.5 & 1     \\
0.5 & -0.5 & 0.5 & -0.5 \\
-0.5 & 1 & 0.5 & -1     \\
0.5 & -0.5 & -0.5 & 0.5
\end{bmatrix}
 \]
Notice that neither the first or second row is nonnegative. Furthermore, if $D = \diag(\begin{bmatrix} 2 & 2 & -1 & -1 \end{bmatrix})$, then the matrix
\[
A := SDS^{-1} =
\begin{bmatrix}
0.5 & 0 & 1.5 & 0   \\
0 &  0.5 & 0 & 1.5  \\
1.5 & 0 & 0.5 & 0   \\
0 &  1.5 & 0 & 0.5
\end{bmatrix} \]
is nonnegative and nonscalar. Thus, $\coni(e)$ is properly contained in $\mathcal{C}(S)$, but $S$ is not a Perron similarity. 
\end{example}

%----------------------------------
\section{Ideal Perron Similarities}
    \label{sec:ideal}

If $S \in \mathsf{GL}_n$ is a Perron similarity, then $S$ is called \emph{ideal} if $\mathcal{C}(S) = \mathcal{C}_r(S)$. For real matrices, it is known that $S$ is ideal if and only if $\exists k \in \langle n \rangle$ such that $e_k^\top S = e^\top$ and $e_i^\top S \in \mathcal{C}(S)$ for all $i \in \langle n \rangle$ \cite[Theorem 3.8]{jp2017}. A careful examination of the arguments also applies to complex matrices. 

%--------------
\begin{theorem} 
    \label{thm:ideal}
If $S\in \mathsf{GL}_m$ and $T\in \mathsf{GL}_n$ are ideal, then $S\otimes T$ is ideal.
\end{theorem}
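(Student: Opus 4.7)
The plan is to invoke the characterization stated just before the theorem: $S \otimes T$ is ideal if and only if some row of $S \otimes T$ equals $e^\top$ and every row of $S \otimes T$ lies in $\mathcal{C}(S \otimes T)$. Since $S \otimes T$ is already known to be a Perron similarity by Theorem \ref{thm: Perron Similarity}, it suffices to verify these two structural conditions.

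For the row-of-ones condition, let $k \in \langle m\rangle$ and $\ell \in \langle n\rangle$ be the indices guaranteed by the idealness of $S$ and $T$, so that $e_k^\top S = e^\top \in \mathbb{F}^m$ and $e_\ell^\top T = e^\top \in \mathbb{F}^n$. Set $p := (k-1)n+\ell$. By Lemma \ref{lem:kronprodbasis}, $e_p = e_k \otimes e_\ell$, and using the mixed-product property of the Kronecker product,
\begin{align*}
e_p^\top (S \otimes T)
&= (e_k \otimes e_\ell)^\top (S \otimes T)
= (e_k^\top S) \otimes (e_\ell^\top T)
= e^\top \otimes e^\top
= e^\top,
\end{align*}
where the final $e$ is the all-ones vector in $\mathbb{F}^{mn}$ (the Kronecker product of two all-ones vectors is the all-ones vector of the appropriate size).

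For the second condition, fix any $i \in \langle mn\rangle$. Lemma \ref{lem:basisdecomp} gives the decomposition $e_i = e_{\lceil i/n \rceil} \otimes e_{(i-1)\%n + 1}$, so
\begin{align*}
e_i^\top (S \otimes T) = \bigl(e_{\lceil i/n \rceil}^\top S\bigr) \otimes \bigl(e_{(i-1)\%n + 1}^\top T\bigr).
\end{align*}
Because $S$ and $T$ are ideal, $e_{\lceil i/n \rceil}^\top S \in \mathcal{C}(S)$ and $e_{(i-1)\%n + 1}^\top T \in \mathcal{C}(T)$. By Theorem \ref{thm:subsetcontain}, $\mathcal{C}(S) \otimes \mathcal{C}(T) \subseteq \mathcal{C}(S \otimes T)$, whence $e_i^\top (S \otimes T) \in \mathcal{C}(S \otimes T)$. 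Since $i$ was arbitrary, every row of $S \otimes T$ lies in $\mathcal{C}(S \otimes T)$, and the cited characterization yields that $S \otimes T$ is ideal.

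There is no real obstacle here: all of the work has already been done in the preliminary lemmas (the index arithmetic in Lemmas \ref{lem:altdivalg}--\ref{lem:basisdecomp} and the cone inclusion in Theorem \ref{thm:subsetcontain}), and the mixed-product property of the Kronecker product does the rest. The only point requiring minor care is the notational overloading of $e$, ensuring that the identity $e \otimes e = e$ is applied with the correct ambient dimensions.
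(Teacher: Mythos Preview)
Your proof is correct and follows essentially the same route as the paper: verify the all-ones row via Lemma~\ref{lem:kronprodbasis}, then decompose an arbitrary row via Lemma~\ref{lem:basisdecomp} and use the idealness of $S$ and $T$ to place it in $\mathcal{C}(S\otimes T)$. Your explicit appeal to Theorem~\ref{thm:subsetcontain} for the second step is in fact slightly cleaner than the paper's argument, which concludes the display with ``$\ge 0$'' where ``$\in \mathcal{C}(S\otimes T)$'' is what is actually needed.
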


\begin{proof}
By hypothesis, there are integers $k \in \langle m \rangle$ and $\ell \in \langle n \rangle$ such that $e_k^\top S = e$ and $e_\ell^\top T = e$. Notice that $(k-1)n + \ell \in \langle mn \rangle$ and by Lemma \ref{lem:kronprodbasis}, 
\begin{align*}
e_{(k-1)n + \ell}^\top (S \otimes T) 
&= (e_k \otimes e_\ell)^\top (S \otimes T)                                  \\
&= (e_k^\top \otimes e_\ell^\top) (S \otimes T)                             \\
&= (e_k^\top S) \otimes (e_\ell^\top T) = e^\top \otimes e^\top = e^\top. 
\end{align*}

If $i \in \langle mn \rangle$, then, following Lemma \ref{lem:basisdecomp},
\begin{align*}
e_i^\top (S \otimes T) 
&= (e_{\lceil i/n \rceil} \otimes e_{(i-1)\%n + 1})^\top (S \otimes T)      \\
&= (e_{\lceil i/n \rceil}^\top \otimes e_{(i-1)\%n + 1}^\top) (S \otimes T) \\
&= (e_{\lceil i/n \rceil}^\top S) \otimes (e_{(i-1)\%n + 1}^\top T)         
\ge 0
\end{align*}
since $e_{\lceil i/n \rceil}^\top S \in \mathcal{C}(S)$ and $e_{(i-1)\%n + 1}^\top T \in \mathcal{C}(T)$. 
\end{proof}

%--------------
\begin{theorem}
    \label{thm:conicontain}
If $U = \{u_1,\ldots, u_p\} \subseteq \mathbb{F}^m$ and $V = \{v_1,\ldots, v_q\} \subseteq \mathbb{F}^n$, then $\coni(U) \otimes \coni(V) \subseteq \coni(U \otimes V)$ and $\conv (U) \otimes \conv (V) \subseteq \conv (U\otimes V)$.
\end{theorem}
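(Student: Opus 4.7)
The plan is to exploit the bilinearity of the Kronecker product, which turns a product of linear combinations into a double linear combination whose coefficients are products of the original coefficients. Specifically, if $x \in \coni(U)$ and $y \in \coni(V)$, write $x = \sum_{i=1}^p \alpha_i u_i$ and $y = \sum_{j=1}^q \beta_j v_j$ with $\alpha_i, \beta_j \ge 0$. Then by the mixed-product/bilinearity properties of $\otimes$,
\[
x \otimes y = \left(\sum_{i=1}^p \alpha_i u_i\right) \otimes \left(\sum_{j=1}^q \beta_j v_j\right) = \sum_{i=1}^p \sum_{j=1}^q (\alpha_i \beta_j)\,(u_i \otimes v_j).
\]
Since each $\alpha_i \beta_j \ge 0$ and each $u_i \otimes v_j \in U \otimes V$, this displays $x \otimes y$ as a conical combination of elements of $U \otimes V$, proving the first inclusion.

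For the second inclusion, the plan is to repeat the identical computation under the extra constraint $\sum_i \alpha_i = 1 = \sum_j \beta_j$ dictated by convex (rather than conical) combinations. The coefficients $\alpha_i \beta_j$ remain nonnegative, and their total sum factors as
\[
\sum_{i=1}^p \sum_{j=1}^q \alpha_i \beta_j = \left(\sum_{i=1}^p \alpha_i\right)\left(\sum_{j=1}^q \beta_j\right) = 1,
\]
so $x \otimes y \in \conv(U \otimes V)$.

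No step here looks hard; the only thing to be careful about is citing (or justifying in one line) the bilinearity identity $\left(\sum_i \alpha_i u_i\right) \otimes \left(\sum_j \beta_j v_j\right) = \sum_{i,j} \alpha_i \beta_j (u_i \otimes v_j)$, which follows immediately from $(\alpha u) \otimes (\beta v) = \alpha \beta (u \otimes v)$ and distributivity of $\otimes$ over vector addition. Once this is invoked, both containments are a single line of bookkeeping on the coefficients.
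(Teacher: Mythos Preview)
Your proof is correct and follows essentially the same approach as the paper's: write each factor as a nonnegative (resp.\ convex) combination of the generators, expand the tensor product by bilinearity, and observe that the resulting coefficients $\alpha_i\beta_j$ are nonnegative (resp.\ nonnegative and sum to $1$). The only difference is notational, and your explicit remark on why bilinearity holds is a nice touch.
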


\begin{proof}
If $x \in \coni(U) \otimes \coni(V)$, then $x = u\otimes v$, where $u \in \coni(U)$ and $v \in \coni(V)$. By definition,
\begin{align*}
    u = \sum_{i=1}^p \lambda_i u_i,~\lambda_i \ge 0,~\forall i \in \langle p \rangle
\end{align*}
and 
\begin{align*}
    v = \sum_{j=1}^q \mu_j v_j,~\mu_j \ge 0,~\forall j \in \langle q \rangle.
\end{align*}
By properties of the Kronecker product,
\begin{align*}
x = u\otimes v 
&= \left( \sum_{i=1}^p \lambda_i u_i \right) \otimes \left( \sum_{j=1}^q \mu_j v_j \right)  \\
&= \sum_{i=1}^p \left( \lambda_i u_i \otimes \sum_{j=1}^q (\mu_j v_j) \right)               \\
&= \sum_{i=1}^p \sum_{j=1}^q \lambda_i \mu_j (u_i \otimes v_j) \in \coni(U)\otimes \coni(V) 
\end{align*}
since $\lambda_i \mu_j \geq 0$, $\forall (i,j) \in \langle p \rangle \times \langle q \rangle$.

If, in addition, 
\begin{align*}
    \sum_{i=1}^p \lambda_i = \sum_{j=1}^q \mu_j = 1,
\end{align*}
then 
\begin{align*}
\sum_{i=1}^p \sum_{j=1}^q \lambda_i \mu_j 
= \sum_{i=1}^p \left( \lambda_i \sum_{j=1}^q \mu_j \right)
= \sum_{i=1}^p \lambda_i 
= 1,
\end{align*}
i.e., $\conv(U) \otimes \conv(V) \subseteq \conv(U\otimes V)$.
\end{proof}

Recall that a matrix is \emph{irreducible} if and only if its digraph is strongly connected (see, e.g., Brualdi and Ryser \cite[Theorem 3.2.1]{br1991}). The \emph{index of imprimitivity} of an irreducible matrix is the greatest common divisor of the lengths of the closed directed walks in its digraph \cite[p.~68]{br1991}.

An invertible matrix $S$ is called \emph{strong} if there is an {irreducible} nonnegative matrix $A$ such that $A = S D S^{-1}$ (in such a case, $S$ must be a Perron similarity since the eigenspace corresponding to the Perron root is one-dimensional). If $S$ is strong, then $S$ is ideal if and only if $\mathcal{P}(S) = \mathcal{P}_r(S)$ \cite{jp_psniep}.

The following result is a consequence of a result stated by Harary and Trauth \cite[p.~251]{ht1966} and follows from a result due to McAndrew \cite[Theorem 2]{m1963}. 

%--------------
\begin{theorem}
    \label{thm:kronirr}
If $A$ and $B$ are irreducible and $k$ and $\ell$ are the indices of imprimitivity of $A$ and $B$, respectively, then $A \otimes B$ is irreducible if and only if $\gcd(k,\ell) = 1$.
\end{theorem}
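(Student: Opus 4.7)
The plan is to pass to digraphs and apply the Frobenius cyclic-class decomposition of an irreducible matrix. With $A \in \mathsf{M}_m$ and $B \in \mathsf{M}_n$, the formula \eqref{eq:ijentry} shows that, under the identification of $\langle mn \rangle$ with $\langle m \rangle \times \langle n \rangle$ underlying the Kronecker product, $D(A \otimes B)$ is the tensor (``categorical'') product of the digraphs $D(A)$ and $D(B)$: the ordered pair $((i_1,j_1),(i_2,j_2))$ is an arc of $D(A \otimes B)$ if and only if both $(i_1,i_2)$ is an arc of $D(A)$ and $(j_1,j_2)$ is an arc of $D(B)$. By induction on length, a walk of length $N$ from $(i_1,j_1)$ to $(i_2,j_2)$ in $D(A \otimes B)$ exists if and only if there are walks of the same length $N$ from $i_1$ to $i_2$ in $D(A)$ and from $j_1$ to $j_2$ in $D(B)$.

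Next I would invoke the Frobenius cyclic-class theorem (see, e.g., Brualdi--Ryser \cite[Chapter 3]{br1991}): since $A$ is irreducible with index of imprimitivity $k$, the vertex set of $D(A)$ partitions into classes $V_0^A,\ldots,V_{k-1}^A$ such that every arc runs from some $V_r^A$ into $V_{(r+1) \% k}^A$, and for any $i_1 \in V_r^A$ and $i_2 \in V_s^A$ there exists $N_A(i_1,i_2) \in \mathbb{N}$ such that a walk of length $N$ from $i_1$ to $i_2$ exists if and only if $N \equiv s - r \pmod k$ and $N \geq N_A(i_1,i_2)$. Write $\rho_A(i_1,i_2) := (s - r) \% k$, and define $V_s^B$, $\rho_B(j_1,j_2)$, and $N_B(j_1,j_2)$ analogously for $B$.

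For sufficiency, assume $\gcd(k,\ell) = 1$. Given any two vertices $(i_1,j_1)$ and $(i_2,j_2)$ of $D(A \otimes B)$, the Chinese Remainder Theorem yields arbitrarily large positive integers $N$ with $N \equiv \rho_A(i_1,i_2) \pmod k$ and $N \equiv \rho_B(j_1,j_2) \pmod \ell$; choosing such an $N$ with $N \geq \max\{N_A(i_1,i_2), N_B(j_1,j_2)\}$ gives the required walk in $D(A \otimes B)$, so $A \otimes B$ is irreducible. For necessity, suppose $d := \gcd(k,\ell) > 1$; then $k,\ell \geq 2$, so I may pick $i_1 = i_2 \in V_0^A$, $j_1 \in V_0^B$, and $j_2 \in V_1^B$. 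Any walk from $(i_1,j_1)$ to $(i_2,j_2)$ in $D(A \otimes B)$ would have length $N \geq 1$ satisfying $N \equiv 0 \pmod k$ and $N \equiv 1 \pmod \ell$, forcing $0 \equiv 1 \pmod d$, a contradiction; hence $A \otimes B$ is reducible.

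The main obstacle is the clean invocation of the cyclic-class decomposition together with the eventual-periodicity statement for walk-lengths between a fixed ordered pair of vertices; both are classical but need to be cited (or briefly reproved) with care. Once they are in hand, each direction is a short Chinese Remainder Theorem argument, and since the paper already appeals to Brualdi--Ryser \cite{br1991} for the definition of the index of imprimitivity, the structural facts can be drawn from the same reference.
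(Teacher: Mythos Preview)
Your argument is correct. The key steps---identifying $D(A\otimes B)$ with the categorical product of $D(A)$ and $D(B)$, invoking the Frobenius cyclic-class decomposition with its eventual-periodicity of walk-length sets, and then using the Chinese Remainder Theorem in each direction---are all sound, and your handling of the necessity direction (forcing $0\equiv 1\pmod d$) is clean.

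The paper, however, gives no proof at all: it simply records the theorem as a consequence of results of Harary--Trauth \cite{ht1966} and McAndrew \cite{m1963}. So your proposal is not a different route to the paper's proof but rather a self-contained proof where the paper offers only citations. What you gain is independence from those references (modulo the cyclic-class facts from Brualdi--Ryser, which the paper already cites); what the paper gains is brevity. Your argument is essentially the classical one underlying the cited results, so the two are compatible rather than genuinely divergent.
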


%----------------
\begin{corollary}
Suppose that $S$ and $T$ are ideal and strong. Let $A$ and $B$ be irreducible nonnegative matrices with relatively prime indices of imprimitivity $k$ and $\ell$, respectively, and such that $A = SDS^{-1}$ and $B = T\hat{D}T^{-1}$. Then $S \otimes T$ is ideal, strong, and $\mathcal{P}(S)\otimes\mathcal{P}(T) \subset \mathcal{P}(S \otimes T)$.
\end{corollary}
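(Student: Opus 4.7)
The plan is to verify, in order, that (i) $S \otimes T$ is ideal, (ii) $S \otimes T$ is strong, and (iii) the containment $\mathcal{P}(S) \otimes \mathcal{P}(T) \subseteq \mathcal{P}(S \otimes T)$ is strict. Write $m$ and $n$ for the sizes of $S$ and $T$; the strict containment is of course nontrivial only in the case $m, n \ge 2$ (parallel to the hypothesis of Theorem \ref{thm:conecontain}), which I assume. Step (i) is immediate from Theorem \ref{thm:ideal}.

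For step (ii), the mixed-product property together with Lemma \ref{Lem: kron of diagonals} gives
\[
A \otimes B \;=\; (SDS^{-1}) \otimes (T\hat{D}T^{-1}) \;=\; (S \otimes T)\, D_{d \otimes \hat{d}}\, (S \otimes T)^{-1},
\]
where $d$ and $\hat{d}$ denote the vectors of diagonal entries of $D$ and $\hat{D}$. Since $\gcd(k, \ell) = 1$, Theorem \ref{thm:kronirr} shows that $A \otimes B$ is irreducible, so $S \otimes T$ is strong, witnessed by $A \otimes B$.

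For step (iii), Theorem \ref{thm:subsetcontain} provides the inclusion. Since $S \otimes T$ is ideal and strong, $\mathcal{P}(S \otimes T) = \mathcal{P}_r(S \otimes T) = \conv\{u_a \otimes v_b : a \in \langle m \rangle,\, b \in \langle n \rangle\}$, where $u_a := e_a^\top S$ and $v_b := e_b^\top T$ are the rows of $S$ and $T$ (the identification of the rows of $S \otimes T$ as Kronecker products of rows is precisely the lemma preceding Lemma \ref{Lem: kron of diagonals}). Fix $a_1 \ne a_2$ and $b_1 \ne b_2$ and set
\[
z := \tfrac{1}{2}(u_{a_1} \otimes v_{b_1} + u_{a_2} \otimes v_{b_2}),
\]
which lies in $\mathcal{P}(S \otimes T)$ as a convex combination of two of its rows. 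Suppose toward a contradiction that $z = x \otimes y$ for some $x \in \mathcal{P}(S)$ and $y \in \mathcal{P}(T)$. Under the bijection $\mathbb{F}^{mn} \to \mathsf{M}_{m \times n}(\mathbb{F})$ sending $w$ to the matrix $W$ with $W_{a,b} := w_{(a-1)n+b}$, the vector $x \otimes y$ corresponds to the rank-one matrix $xy^\top$, while $z$ corresponds to
\[
\tfrac{1}{2}(u_{a_1} v_{b_1}^\top + u_{a_2} v_{b_2}^\top) \;=\; \tfrac{1}{2}\bigl[\,u_{a_1}\mid u_{a_2}\,\bigr]\bigl[\,v_{b_1}\mid v_{b_2}\,\bigr]^\top,
\]
which has rank two because distinct rows of the invertible matrices $S$ and $T$ are linearly independent. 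This contradiction establishes strict containment.

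Steps (i) and (ii) are short applications of results already in place, so the real content is step (iii). The key observation is that, under the vec-correspondence, $\mathcal{P}(S) \otimes \mathcal{P}(T)$ meets $\mathcal{P}(S \otimes T)$ only in its rank-at-most-one locus, so any convex combination of rows producing a rank-two matrix automatically escapes. The mild subtlety is just exhibiting such an element concretely; the invertibility of $S$ and $T$ supplies the needed linear independence without further work.
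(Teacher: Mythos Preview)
Your proof is correct. Steps (i) and (ii) match the paper's argument; for step (iii) you take a genuinely different route. The paper, following the template of Theorem \ref{thm:conecontain}, picks totally nonzero $x \in \mathcal{P}(S)$ and $y \in \mathcal{P}(T)$ that are not multiples of $e$, forms $z = x \otimes y$, then perturbs to $z' = \varphi z + \psi e$ (a convex combination, using $\mathcal{P}(S\otimes T) = \mathcal{P}_r(S\otimes T)$ to ensure $z' \in \mathcal{P}(S\otimes T)$) and derives a contradiction from an explicit cross-ratio computation on four entries of $z'$. Your argument instead exploits the reshaping bijection $\mathbb{F}^{mn} \to \mathsf{M}_{m\times n}$ to observe that pure tensors correspond to matrices of rank at most one, while the midpoint of two rows $u_{a_1}\otimes v_{b_1}$ and $u_{a_2}\otimes v_{b_2}$ corresponds to a rank-two matrix by invertibility of $S$ and $T$. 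This is shorter and more conceptual: it shows directly that such a $z$ is not a pure tensor at all, obviating the need for Lemma \ref{lem:nz}, the totally-nonzero hypothesis, and the algebraic manipulation of ratios. The paper's approach, on the other hand, parallels its earlier cone argument verbatim and makes the obstruction concrete at the level of entries.
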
   

\begin{proof}
The matrix $S \otimes T$ is ideal by Theorem \ref{thm:ideal} and strong by Theorem \ref{thm:kronirr}. Thus, $\mathcal{P}_r(S) = \mathcal{P}(S)$, $\mathcal{P}_r(T) = \mathcal{P}(T)$, and $\mathcal{P}_r(S \otimes T) = \mathcal{P}(S \otimes T)$. The weak containment $\mathcal{P}(S)\otimes\mathcal{P}(T) \subseteq \mathcal{P}(S \otimes T)$ follows from Theorem \ref{thm:conicontain}.

By Lemma \ref{lem:nz}, we may select $\hat{x}\in \mathcal{C}(S)$ and $\hat{y}\in \mathcal{C}(T)$ such that $\hat{x}$ and $\hat{y}$ are totally nonzero and not scalar multiples of $e$. Furthermore, the totally nonzero vectors $x := \hat{x}/||x||_\infty$ and $y := \hat{y}/||y||_\infty$ belong to $\mathcal{P}(S)$. If $z := x \otimes y$, then $z\in \mathcal{P}(S\otimes T)$ by Theorem \ref{thm:subsetcontain}. As $x$ and $y$ are not scalar multiples of $e$, there are integers $i,j \in \langle m\rangle$ and $k,\ell \in \langle n\rangle$ such that $x_i\neq x_j$ and $y_k\neq y_\ell$. Notice that the vector $z$ contains the blocks $x_i y$ and $x_j y$ so the vector $z$ has the entries $z_\alpha = x_iy_k$, $z_\beta=x_iy_\ell$, $z_\gamma=x_j y_k$, and $z_\delta = x_j y_\ell$. Furthermore, notice that   \[\frac{z_\alpha}{z_\beta}=\frac{y_k}{y_\ell}=\frac{z_\gamma}{z_\delta}. \] 

Since $\mathcal{P}(S \otimes T) = \mathcal{P}_r(S \otimes T)$, it follows that $\mathcal{P}(S \otimes T)$ is convex. As $x$ and $y$ are not multiples of $e$, they are not on the ray passing through $e$. Thus, we may select $\varphi, \psi > 0$ such that $z' := \varphi z + \psi e$ is totally nonzero, $\varphi + \psi = 1$, and $z' \in \mathcal{C}(S \otimes T)$. 

For contradiction, assume that $z' = x' \otimes y'$, where $x'\in \mathcal{P}(S)$ and $y'\in \mathcal{P}(T)$. By a similar argument above,  
\[ \frac{z'_\alpha}{z'_\beta}=\frac{y'_k}{y'_\ell}=\frac{z'_\gamma}{z'_\delta}. \] 
However,
\begin{align*}
\frac{z'_\alpha}{z'_\beta} &= \frac{z'_\gamma}{z'_\delta}                                                                                                   \\
\iff 
\frac{\varphi z_\alpha + \psi}{\varphi z_\beta + \psi} &= \frac{\varphi z_\gamma+\psi}{\varphi z_\delta+\psi}                                               \\
\iff 
\frac{\varphi x_iy_k+\psi}{\varphi x_iy_\ell+\psi}&=\frac{\varphi x_j y_k + \psi}{\varphi x_j y_\ell+\psi}                                                  \\
\iff 
(\varphi x_iy_k + \psi)(\varphi x_j y_\ell + \psi) &= (\varphi x_i y_\ell + \psi)(\varphi x_j y_k + \psi)                                                   \\
\iff 
\varphi^2 x_i x_j y_k y_\ell + \varphi\psi (x_i y_k + x_j y_\ell) + \psi ^2 &= \varphi^2 x_i x_j y_k y_\ell + \varphi\psi (x_i y_\ell + x_j y_k) + \psi^2   \\
\iff 
\varphi\psi (x_i-x_j)(y_k-y_\ell) &=0                                                                                                                              \\
\iff x_i-x_j = 0~\text{or}~y_k-y_\ell &= 0,
\end{align*}
a contradiction. Thus, $\mathcal{P}(S)\otimes \mathcal{P}(T)\subset \mathcal{P}(S\otimes T)$.  
\end{proof}

%--------------
\begin{example}
For $n \in \mathbb{N}$, let $F = F_n$ be the \emph{discrete Fourier transform} matrix of order $n$, i.e., $F$ is the $n$-by-$n$ matrix with $(i,j)$-entry equal to $\omega^{(i-1)(j-1)}$, where $\omega := \exp(2 \pi i/n)$. Notice that
\[ F =
\begin{bmatrix}
1       & 1             & \cdots    & 1                 & \cdots    & 1                 \\
1       & \omega        & \cdots    & \omega^k          & \cdots    & \omega^{n-1}      \\
\vdots  & \vdots        & \ddots    & \vdots            & \vdots    & \vdots   		    \\
1       & \omega^k      & \cdots    & \omega^{k^2}      & \cdots    & \omega^{k(n-1)}   \\
\vdots  & \vdots        & \vdots    & \vdots            & \ddots    & \vdots            \\
1       & \omega^{n-1}  & \cdots    & \omega^{k(n-1)}   & \cdots    & \omega^{(n-1)^2} 
\end{bmatrix} \]
and $F$ is ideal as it is a Vandermonde matrix corresponding to the polynomial $p(t) := t^n - 1$. The companion matrix $C$ corresponding to $p$ is nonnegative and the spectrum of the nonnegative matrix $C^{k-1}$ corresponds to the $k$\textsuperscript{th}-row of $F$, $k \in \langle n \rangle$. Furthermore, $F$ is strong given that $C$ is the adjacency matrix of the directed cycle of length $n$ and, hence, is irreducible (it also admits positive circulant matrices).

A normalized, realizable spectrum $x$ is called \emph{extremal} if $\alpha x$ is not realizable whenever $\alpha > 1$. Notice that every row of $F$ is extremal and every point in every row is extremal in the Karpelevi{\v{c}} region. 

At the 2019 Meeting of the International Linear Algebra Society in Rio de Janeiro, the second author asked whether other such matrices exist. Notice that $F_n \otimes F_m$, $F_m \otimes H_n$, and $H_n \otimes F_m$ are matrices all of whose rows and entries are extremal. .  
\end{example}

%------------------------
\bibliographystyle{plain}
\bibliography{kp}

\begin{thebibliography}{1}

\bibitem{br1991}
Richard~A. Brualdi and Herbert~J. Ryser.
\newblock {\em Combinatorial matrix theory}, volume~39 of {\em Encyclopedia of
  Mathematics and its Applications}.
\newblock Cambridge University Press, Cambridge, 1991.

\bibitem{ht1966}
Frank Harary and Charles~A. Trauth, Jr.
\newblock Connectedness of products of two directed graphs.
\newblock {\em SIAM J. Appl. Math.}, 14:250--254, 1966.

\bibitem{jp_psniep}
Charles~R. Johnson and Pietro Paparella.
\newblock Perron similarities and the nonnegative inverse eigenvalue problem.
\newblock In preparation.

\bibitem{jp2016}
Charles~R. Johnson and Pietro Paparella.
\newblock Perron spectratopes and the real nonnegative inverse eigenvalue
  problem.
\newblock {\em Linear Algebra Appl.}, 493:281--300, 2016.

\bibitem{jp2017}
Charles~R. Johnson and Pietro Paparella.
\newblock Row cones, {P}erron similarities, and nonnegative spectra.
\newblock {\em Linear Multilinear Algebra}, 65(10):2124--2130, 2017.

\bibitem{m1963}
M.~H. McAndrew.
\newblock On the product of directed graphs.
\newblock {\em Proc. Amer. Math. Soc.}, 14:600--606, 1963.

\end{thebibliography}

\end{document}